\newtheorem{theorem}{Theorem}[section]
\newtheorem{proposition}[theorem]{ Proposition} 
\newtheorem{lemma}[theorem]{ Lemma}
\newtheorem{definition}[theorem]{Definition} 
\theoremstyle{remark}
\newtheorem{example}{\it Example}
\def \1{\mathbb {1}}
\def \RM{\mathbb {R}}
\def \NM{\mathbb{N}}
\def \ZM{\mathbb{Z}}
\def \CM{\mathbb{C}}
\def \Id {{\rm Id\,}}
\def \d{\partial}
\def\a{\alpha}
\def\b{\beta}
\def\e{\varepsilon}  
\def\g{\gamma}
\def\l{\lambda}
\def \s{\sigma}
\def \to{\longrightarrow} 
\def \alg{\mathfrak{g}}
\def \< {{\thengle }}
\def \> {{\rangle }}
\def \( {\left( }
\def \) {\right) }
\newcommand{\Bt}{{\mathcal B}}
\newcommand{\Ht}{{\mathcal H}}
\newcommand{\Lt}{{\mathcal L}}
\newcommand{\Mt}{{\mathcal M}}
\renewcommand{\mod}{{\rm  mod\,}}
\title[AN  ABSTRACT KAM THEOREM]{AN ABSTRACT  KAM THEOREM}
\author{  Mauricio  Garay}
\address{ Institut f\"ur Mathematik\\
FB 08 - Physik, Mathematik und Informatik\\
Johannes Gutenberg-Universit\"at Mainz\\
Staudinger Weg 9\\
55128 Mainz.}
\begin{document}
\begin{abstract} The KAM iterative scheme turns out to be effective in many problems arising in perturbation theory. I propose an abstract version of the KAM theorem to gather these different results.
\end{abstract}
\maketitle
\section*{Introduction}
The KAM iteration scheme was first proposed  in order to prove the stability of invariant tori in Hamiltonian systems. It was
initially conceived, by Kolmogorov, as a Newton type iteration in which each successive step gives a new function defined over some smaller neighbourhood. These neighbourhoods form a decreasing sequence which finally converges to some neighbourhood~\cite{Kolmogorov_KAM}. At each step of the iteration, the space of functions on the given neighbourhood  is  a Banach space. In this way, we get a nested sequence of vector spaces forming a directed system of Banach spaces. 

This KAM process has been adapted to many other cases~(see for instance \cite{Sevryuk_reversible,Stolovitch_KAM}).  The purpose of this paper is to provide a common abstract KAM theorem. It is part of the program I proposed to solve the Herman conjecture~(see~\cite{Oberwolfach_2012}).
 
The first steps in the construction of an abstract perturbation theory were done by Moser,  who suggested a theoretical approach based on infinite dimensional group actions and implicit function theorems. This led Hamilton, Sergeraert and Zehnder to the well-established theory of implicit function theorems in Fr\'echet spaces~\cite{Hamilton_implicit,Moser_pde,Sergeraert,Zehnder_implicit}.  

 These authors already emphasised that the application of implicit function theorems to concrete situation might be quite subtle~: the linearised problem should be solved in a whole neighbourhood and not just in one point, the group should admit some non trivial type of parametrisations etc.  It is therefore tempting to provide a general statement for group actions in infinite dimensional spaces. For locally homogeneous space, the question was tackled by Sergeraert in his thesis~\cite{Sergeraert}~(see also~\cite{groupes}). Zehnder tried to go further but could not get more than a heuristic~(see~\cite[Chapter 5]{Zehnder_implicit}).
 
 In this paper, I will  continue Moser's program by including KAM theory as a study of infinite dimensional group actions in the analytic case.

In many points,  the axiomatisation I will present might seem similar to Zehnder's formalisation of the Nash-Moser  theorem~\cite{Moser_KAM,Nash_imbedding,Zehnder_implicit}. It is in fact different~: unlike Fr\'echet spaces, we consider directed systems of Banach spaces and not inverse ones. In the analytic theory, the direct limit of the system corresponds to the space of germs  of holomorphic functions while the inverse limit of a projective system correspond to the space of holomorphic functions on some fixed open neighbourhood.

This difference between direct and inverse limits can be better understood, if we consider the following Cauchy problem in two variables~:
$$\d_t u=\d_z u,\ u(t=0,\cdot)=u_0. $$
If $u_0$ a convergent power series then so is the solution $  u(t,z)=u_0(t+z) $ to this Cauchy problem. Now, we change our functional space and take the Fr\'echet space of holomorphic functions in some fixed open disk, that is, instead of a direct limit of Banach spaces, we consider an inverse one. 

In general, for fixed $t$, the solution $  u(t,z)=u_0(t+z) $ is no longer holomorphic in the initial disk, so there is no solution to our initial value problem in this Fr\'echet space. Note that, this remark is an essential ingredient in the proof of the abstract Cauchy-Kovalevskaïa theorem~\cite{Baouendi,Nagumo,Nirenberg,Nishida,Ovsyannikov}.

Now, denote respectively by $\CM\{ z \}$ and $ \CM\{ t,z \}$ the algebra of convergent power series in $z$ and in $z,t$. The solutions to our Cauchy problem are given by the exponential mapping
$$\CM\{ z \} \to \CM\{ t,z \},\ u_0 \mapsto e^{t\d_z}u_0=u_0(t+z).$$
So the existence of the solution is related to the existence of an exponential.
More generally, we will prove the existence of an exponential map for some direct limits of Banach spaces. 

In KAM theory the situation is more subtle : we need to consider directed systems depending on two parameters, one of which controls the construction of a Cantor-like set. This leads to the definition of Arnold spaces and gives a conceptual approach to the ultra-violet cutoff technique  used in 1963 by Arnold in his proof of the KAM theorem~\cite{Arnold_KAM}. As we shall see, our abstract KAM theorem turns out to be a consequence of the properties shared by exponential mappings in Arnold spaces. 


  \section{Group actions and normal forms}
Let $G$ be a Lie group acting on a finite dimensional smooth $C^\infty$ manifold $V$. We wish to describe the $G$-orbits in the neighbourhood of a point $a \in V$. This means that we search  a submanifold $W \subset V$ containing $a \in V$ for which the map
$$ G \times W \mapsto V,\ (g,x) \mapsto g \cdot x$$
is locally surjective at $(1,a)$. Such a manifold $W$ is called a {\em transversal} to the $G$-action at $a \in V$. Of course, one may take $W$ to be $V$ itself and this will define a trivial transversal. We are rather interested in finding a minimal $W$, our transversal will be optimal if
$$\dim G+ \dim W=\dim V.$$ 

One way of finding  transversals consists
 in linearising the action in the neighbourhood of $a$. 

The action of $G$ induces an action of its Lie algebra $\alg$ on the tangent space to $V$ at $a$ called the {\em infinitesimal action}. 
For simplicity, let us assume that $V$ is an affine space $a+M$ inside a vector space $E$. In that case, the tangent space to $V$ at $a$ gets identified with $M$ and the infinitesimal action satisfies
$$e^{tv}(a+b)=a+t\,v\cdot a+o(t),\ v \in \alg. $$
We say that a vector subspace $F \subset M$ is a {\em transversal to the $\alg$-action} at $a \in E$ if the map
$$ \alg   \to M/F,\ v \mapsto  \overline{v \cdot a}$$
is surjective.

The following proposition is a consequence of the implicit function theorem
\begin{proposition} Let $G$ be a Lie group acting on an affine subspace $a+M $ of a finite dimensional vector space $E$. If $F$ is a transversal to the $\alg$-action then
it is also a transversal to the $G$-action.
\end{proposition}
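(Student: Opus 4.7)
The plan is to apply the implicit function theorem (in its submersion / local surjectivity form) to the evaluation map
$$\Phi : G \times (a+F) \longrightarrow a+M, \qquad (g,x) \longmapsto g\cdot x,$$
at the base point $(1,a)$. Everything reduces to computing the differential of $\Phi$ at $(1,a)$ and checking that it is surjective onto the tangent space $M$ of $a+M$ at $a$.

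First I would identify the tangent spaces. The tangent space to $G$ at $1$ is the Lie algebra $\alg$, and the tangent space to $a+F$ at $a$ is $F$, so $d\Phi_{(1,a)}$ is a linear map $\alg \oplus F \to M$. Restricting to each factor I get
$$d\Phi_{(1,a)}(v,0) = \left.\tfrac{d}{dt}\right|_{t=0} e^{tv}\cdot a = v\cdot a, \qquad d\Phi_{(1,a)}(0,w) = \left.\tfrac{d}{dt}\right|_{t=0} (a+tw) = w,$$
where the first identity uses the expansion $e^{tv}(a+b)=a+t\,v\cdot a+o(t)$ displayed just before the proposition. Consequently
$$\mathrm{Im}\,d\Phi_{(1,a)} = F + \{v\cdot a : v \in \alg\}.$$

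Now I would invoke the hypothesis. By assumption the composition $\alg \to M \to M/F$ sending $v \mapsto \overline{v\cdot a}$ is surjective, which is exactly the statement that $F + \{v\cdot a : v \in \alg\}= M$. Hence $d\Phi_{(1,a)}$ is surjective. Since $G$, $a+F$ and $a+M$ are finite dimensional smooth manifolds, the implicit function theorem (submersion form) guarantees that $\Phi$ is open at $(1,a)$, so its image covers a neighbourhood of $a$ in $a+M$. Taking $W:= a+F \subset V$, this is precisely the local surjectivity condition in the definition of a transversal to the $G$-action, finishing the proof.

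There is essentially no hard step: the content of the proposition is the bookkeeping that links the infinitesimal surjectivity condition ``$\alg \to M/F$ is onto'' to the honest local surjectivity ``$G\times W \to V$ is onto near $(1,a)$''. The only subtle point worth emphasising is that one must parametrise $W$ as an affine subspace $a+F$ (rather than work with an abstract submanifold) so that the tangent map in the $W$-direction is the identity inclusion $F \hookrightarrow M$; this is what makes the image of $d\Phi_{(1,a)}$ literally equal to $F + \alg\cdot a$ rather than merely a subspace containing both summands up to identification.
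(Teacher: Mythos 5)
Your argument is correct and follows exactly the route the paper indicates: the paper offers no written proof beyond the remark that the proposition is a consequence of the implicit function theorem, and your computation of $d\Phi_{(1,a)}$ on $\alg\oplus F$ together with the submersion form of that theorem is the standard way to fill in that remark. The identification of surjectivity of $\alg\to M/F$ with $F+\{v\cdot a: v\in\alg\}=M$ and the choice $W=a+F$ are precisely what is needed, so nothing is missing.
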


 \begin{figure}[ht]
\centerline{\epsfig{figure=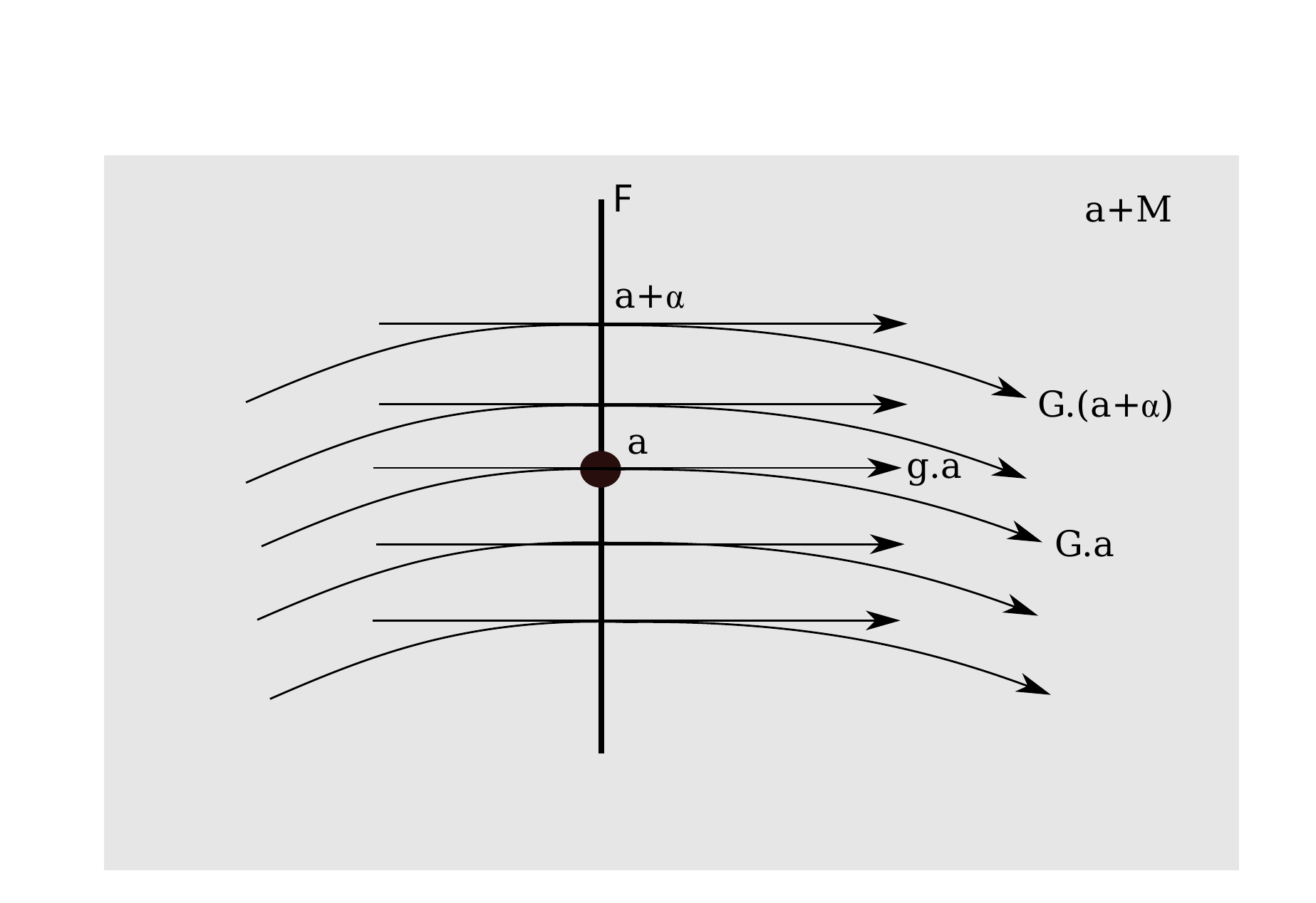,height=0.5\linewidth,width=0.7\linewidth}}
\end{figure} 
The element of a transversal for the linearised action are then called {\em normal forms}. Reduction theory of endomorphisms and the Jordan normal form correspond to the adjoint action of $G=GL(n,\CM)$ on its Lie algebra, so here $E=M=\alg$. If $a$ is diagonal with non-multiple eigenvalues then the $n$-dimensional vector space of diagonal matrices defines a transversal to the adjoint action at $a$. The general case was studied by Arnold in~\cite{Arnold_matrices}

One may also consider infinite dimensional variants. In singularity theory, one studies the case where $G$ is the infinite dimensional automorphism group of a local ring. The corresponding Lie algebra consists of vector fields and the exponential mapping takes a vector field to its flow. One can prove similar propositions, in this setting, called the {\em versal deformation and finite determinacy theorems}~\cite{Mather_fdet,Tyurina}.
 
 Note that it is customary no to use the exponential but rather to associate the automorphism $\Id+v$ to a vector field $v$. These are the type of special parametrisations necessary to apply implicit function theorems in Fr\'echet spaces.
 
In symplectic geometry, this has catastrophic consequences since for a hamiltonian vector field $v$, the automorphism $\Id+v$ will not be symplectic.
The exponential mapping can nevertheless be avoided if we use generating function techniques, as Arnold did in his paper of 1963, but this involves a difficult analysis. Therefore, we shall prefer the exponential parametrisation.

Up to a certain extent the proposition above can be adapted for lagrangian singularities and integrable systems~\cite{lagrange,quantique,VS,VanStraten_Lagrangian}.

But KAM theory is much more difficult in essence. For in this case, the group acts on a directed systems of vector spaces, that we shall call {\em Arnold spaces.}

There is no general theory of infinite dimensional Lie groups in this context, therefore we will consider only actions of closed subgroups of invertible linear mappings. For simplicity, we consider linear actions.

As there are no Lie groups in the infinite dimensional context, there are also no Lie algebras. These are replaced  by some closed vector subspaces of linear mappings. These linear mappings, called {\em $1$-bounded morphisms},  generalise vector fields. Our exponential map is going to be defined from the space of $1$-bounded morphisms to that of invertible morphisms. This process is similar to the one which takes a vector field to its flow.   Let us now proceed to the formal definitions.
\section{Scaled vector spaces}

\begin{definition} An $S$-scaled vector space is a directed system of Banach spaces $E=(E_s)$ indexed by the open interval $]0,S[$ such that the
maps in the directed system are injective with norm at most one.
\end{definition}
 So, in a scaled vector space $(E_s)$ the maps of the directed system 
 $$f_{ts}:E_t \to E_s,\ t>s $$
 are continuous mappings of Banach spaces with $\| f_{ts} \| \leq 1$. As a vector space $E$ is the direct sums of the $E_s$. It is a topological vector space for the product topology.
 
 Sometimes we omit to specify $S$ and write that $E$ is a {\em scaled vector space.}
  
For 
$$x=x_1+\dots+x_n \in {\bigoplus}_{i=1}^n E_{t_i},$$ 
we define
  $$ | x|_s:=\left\{ \begin{matrix} |f_{t_1s}(x_1)+\dots+f_{t_ns}(x_1)|& \ {\rm if\ t_i \geq s },\ \forall i\\  +\infty &\ {\rm otherwise.} \end{matrix} \right. $$
 
There is a trivial $S$-scaled vector space associated to any Banach space $V$ given by the identity mappings 
 $$E_t \stackrel{\Id}{\to} E_s ,\ E_t=E_s=V $$ for $s,t \in ]0,S[$.
 
 A {\em morphism} of $S$-scaled vector spaces 
 $$u=(u_t):E \to F,\ t \in ]0,S[$$  is given by a function
 $$\phi:]0,S[ \to ]0,S[ $$
and a collection of continuous linear mappings  of Banach spaces from $E_t$ to $F_{\phi(t)}$ which commute to the morphisms of the directed system. This means that for any $ t_1 > t_2  >0$, we have a commutative diagram
$$\xymatrix{  E_{t_1} \ar[r] \ar[rd]^{u_{t_1}} & E_{t_2}  \ar[rd]^{u_{t_2}}  \\
  & F_{s_1} \ar[r] & F_{s_2}    }$$ 
with $\ s_i =\phi(t_i)$. Note that we omit to specify $\phi$ in the notation of a morphism. 
 
 This defines the  {\em category of scaled vector spaces}.

We denote by $\Lt(E,F)$ the vector space of morphisms from $E$ to $F$ and when $E=F$, we simply write $\Lt(E)$  for $\Lt(E,E)$.  
The space $\Lt(E,F)$ is endowed with the strong topology.
 
  A scaled vector space $E$ admits many types of decreasing filtrations. The most simple one is given
by the subvector spaces
 $$E^{(k)}=\{ x \in E: \exists C,\tau,\ | x|_s \leq Cs^k,\ \forall s \leq \tau \},\ k \geq 0.$$
 We call it the {\em canonical filtration}. It will be used systematically.
 
 The following two definitions give an abstract form of differential operators.
\begin{definition}[\cite{groupes}]
 Let $E,F$ be $S$-scaled vector spaces. A $\tau$-morphism,  $\tau <S$, 
is a collection of morphisms 
$$(u_\s,\phi_\s):E \to F,\ \s \in ]0,\tau]$$
with $ \phi_\s(t)=t-\s$ and
such that we have a commutative diagram
$$\xymatrix{  E_{s} \ar[d]_{u_{\s_1}}  \ar[rd]^{u_{\s_2}}  \\
   F_{s+\s_1} \ar[r] & F_{s+\s_2}    }$$

for any $s \in ]0,S[,\ \s_1\geq \s_2$.
\end{definition}
For each $\s \in ]0,S[$, we have an evaluation map which assigns to a $\tau$-morphism $(u,\phi)$ the morphism
$(u_\s,\phi_\s)$. Due to the fact that these evaluations are compatible with the maps of the direct system, we abusively write $\tau$-morphisms as morphisms~: $E \stackrel{u}{\to}~F$.
\begin{definition}[\cite{groupes}]
\label{D::borne}
Let $E,F$ be scaled vector spaces.  A  $\tau$-morphism  $ u  $ is called $k$-bounded if
there exists a real number $C>0$ such that~:
  $$| u(x) |_s \leq \frac{C}{(t-s)^k} | x |_{t},\ {\rm for\ any\ }\ s < t \leq \tau,\ x\in E_t  .$$
\end{definition}
For simplicity, we will assume that, for $k=0$, the condition also holds  for $s=t$, so that $u$ maps $E_t$ to $F_t$.  This assumption is unessential but it simplifies some of the notations.
 
A morphism is called {\em $k$-bounded} (resp. {\em bounded}) if there exists $\tau$ (resp. exist $\tau$ and $k$) for which it is a $k$-bounded
 $\tau$-morphism. 

  We denote by $N_\tau^k(u)$ the smallest constant~$C$ which satisfy the estimate
  in Definition \ref{D::borne}  divided by $e=2,71\dots$~:
  $$N_\tau^k(u):= \sup\{ (t-s)^k \frac{|u_n(x)|_s}{e| x |_t} :s < t \leq \tau,x \in E_t \}$$
   It defines  a norm for the space   $\Bt^k_\tau(E,F)$ of $k$-bounded $\tau$-morphisms.
   
 We denote by  $\Bt^k(E,F)$ the vector space of $k$-bounded morphisms  between $E$ and $F$. One easily checks that the normed vector subspaces $(\Bt^k_\tau(E,F),N_\tau^k),\ \tau \in ]0,S[$ define an $S$-scaling of $\Bt^k(E,F)$.

 \begin{proposition}
 \label{P::exp}
 Let $u$ be a $1$-bounded $\tau$-morphism. If the estimate $N_s^1(u) ~<~s $ holds for any $s \leq \tau$ then
 the expansion
 $$e^u:=\sum_{j \geq 0}\frac{u^j}{j!} $$
 converges to a morphism in $\Lt(E)$. Moreover, if
 $$\nu:=\frac{N^1_\tau(u)}{(\tau-s)} \leq \frac{1}{2} $$
  one has the estimates
   \begin{enumerate}[{\rm 1)}]
    \item $\displaystyle{| e^u x |_{s} \leq 2   |x |_\tau}$ ;
 \item $\displaystyle{| (e^{-u}(\Id +u)-\Id)  x|_s \leq   4|x|_\tau \nu^2}$ ;
  \item $\displaystyle{| (e^{-u}-\Id)  x|_s \leq  2 |x|_\tau \nu} $.

\end{enumerate} 
\end{proposition}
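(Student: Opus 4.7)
The plan is to establish convergence of the series $e^u = \sum_j u^j/j!$ by controlling the iterated composition $u^j$ in the scaled-space sense, then to deduce the three estimates by algebraic manipulation of the resulting series.

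First I would prove the key iteration bound: for a $1$-bounded $\tau$-morphism $u$ with $N = N_\tau^1(u)$ and any $s < \tau$,
$$|u^j x|_s \leq \left(\frac{eNj}{\tau - s}\right)^j |x|_\tau,$$
obtained by telescoping the $1$-bounded inequality through the equally-spaced intermediate radii $s = s_0 < s_1 < \dots < s_j = \tau$ with $s_i - s_{i-1} = (\tau-s)/j$: at each step $|uy|_{s_{i-1}} \leq eN/(s_i - s_{i-1})\,|y|_{s_i}$, and composing over $j$ steps gives the displayed bound. Dividing by $j!$ and using Stirling in the form $j^j \leq e^j j!$ converts each term into a geometric factor in $\nu$. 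The hypothesis $N_s^1(u) < s$ for all $s \leq \tau$ ensures the geometric series converges and the limit $e^u$ is a morphism in $\Lt(E)$; summing gives estimate~(1), with $\nu \leq 1/2$ producing the factor $1/(1-1/2)=2$.

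For the remaining estimates I would exploit algebraic identities on the exponential series, valid once convergence is established. Estimate~(3) follows from $e^{-u} - \Id = -u + \sum_{j \geq 2}(-u)^j/j!$: the linear term contributes at most $e\nu|x|_\tau$ and the quadratic tail is absorbed by the same geometric bound, yielding the stated $2\nu|x|_\tau$. For estimate~(2), regrouping powers of $u$ in the product of series gives
$$e^{-u}(\Id+u) - \Id = \sum_{j \geq 2} \frac{(-1)^{j+1}(j-1)}{j!} \, u^j,$$
a series which starts at $u^2$; bounding term-by-term with the iteration lemma, applied after re-splitting the radius $\tau - s$ into two equal halves to extract the $\nu^2$ factor, yields the claimed $4\nu^2 |x|_\tau$ bound. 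The main technical obstacle is the iteration bound itself: the normalization factor $e$ built into the definition of $N_\tau^1$ is engineered precisely so that Stirling absorbs the $j^j$ cleanly, and verifying that the chosen intermediate radii are admissible in the directed system — together with the careful use of $N^1_s(u)<s$ to close the summation — demands attentive bookkeeping.
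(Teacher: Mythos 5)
Your proposal follows the paper's own proof in all essentials: the same telescoping of the $1$-bounded estimate through $j$ equally spaced intermediate radii, Stirling to trade $j^j$ against $j!$, a geometric series for estimate 1), and for 2) the same expansion $e^{-u}(\Id+u)-\Id=\sum_{j\ge 2}\frac{(-1)^{j+1}(j-1)}{j!}u^j$ (the paper writes it with $j=n+2$) bounded term by term, with 3) handled analogously. Two bookkeeping points, though. First, your displayed chain does not quite close: from $|u^jx|_s\le\bigl(eNj/(\tau-s)\bigr)^j|x|_\tau$ together with $j^j\le e^jj!$ you get the per-term bound $(e^2\nu)^j|x|_\tau$, not $\nu^j|x|_\tau$, so the constants $2,4,2$ (and even convergence of your majorant at $\nu=1/2$) do not follow as written; the single factor $e$ in the definition of $N^1_\tau$ absorbs only one of the two factors $e^j$, and your remark that the normalization makes Stirling cancel ``cleanly'' is contradicted by your own inequality. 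The paper's displayed iteration bound carries $e^{-j}$ rather than $e^{+j}$, which is precisely what makes its per-term bound equal to $\nu^j$; you inherit an ambiguity that is already present in the paper's definition of the norm, but to reach the stated constants you must use that normalization consistently. Second, for estimate 2) no re-splitting of $\tau-s$ into two halves is needed, and at the endpoint $\nu=1/2$ a half-radius split would spoil the constant (each factor $\nu$ becomes $2\nu=1$ and the tail no longer sums geometrically); the paper simply factors $\nu^2$ out of the weighted geometric series, $\sum_{j\ge 2}(j-1)\nu^j=\nu^2/(1-\nu)^2\le 4\nu^2$ for $\nu\le 1/2$, and the same direct summation gives 3) with the linear term contributing $\nu$, not $e\nu$.
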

\begin{proof}
Take  $x \in E_t$ and choose $s<t$.   As $u$ is $1$-bounded by cutting the interval $[s,t]$ into $n$ equal pieces, we get that~:
 $$| u^n(x) |_{s} \leq \frac{n^n}{e^n(t-s)^n} \left( N_{t}^1(u)\right)^n | x |_t. $$
Using Stirling formula, we get that
$$\lim \left( \frac{n^n}{n!} \right)^{1/n} = e,$$
thus~:
$$\frac{N_t^n( u^n )}{n!} \leq  N_t^1(u)^n. $$
The previous estimate shows that
$$| e^u x |_{s} \leq \sum_{j \geq 0} \frac{(N^1_t(u))^j }{(t-s)^j} | x |_t=\frac{1}{1-\nu }   |x |_t.$$
This proves the convergence of the  exponential.
As for $\nu \leq 1/2$, we have $1/(1-\nu) \leq 2$. This proves the first estimates 
by taking $t=\tau$.

Let us now prove the second estimate. The expansion
$$ e^{-u}(\Id +u)-\Id=\sum_{n \geq 0} \frac{(n+1)}{(n+2)!}(-1)^{n+1}u^{n+2}$$
gives
$$ \left| \sum_{n \geq 0} (-1)^{n+1} \frac{(n+1)}{(n+2)!}u^{n+2} ( x) \right|_s \leq  | x |_\tau \sum_{n \geq 0}  
\frac{(n+1)}{(\tau-s)^{n+2}}N^1_\tau(u)^{n+2}=\frac{\nu^2}{(1-\nu)^2}| x |_\tau .$$
As
$$\frac{1}{(1-\nu)^2} \leq 4,\ \forall \nu \in \left[0,\frac{1}{2}\right], $$
we get the estimate 2). The proof of 3) is similar, we leave it to the reader.
\end{proof}

 Note that, by definition, for $u \in \Bt^k(E)^{(2)}$, there exists $C>0$ such that
 $$N_s^k(u) \leq Cs^2. $$ 
for any $s$ small enough. In this case, the conditions of Proposition~\ref{P::exp} are automatically satisfied for $\tau$ small enough.

\section{Arnold spaces}         

\begin{definition} An $S$ pre-Arnold space $E_\bullet$  is  a  product of $S$-scaled vector spaces indexed by $\overline{\NM}:=\NM \cup \{ +\infty \}$~:
$$E_\bullet:=\prod_{n \in \overline{\NM}} E_n.  $$
\end{definition}
As for scaled vector spaces, we sometimes omit the index $S$. We endow pre-Arnold spaces of the direct product topology.

A map of pre-Arnold spaces  $u_\bullet:E_\bullet \to F_\bullet$
is a {\em morphism} if 
 \begin{enumerate}[{\rm i)}]
\item $u_\bullet(E_n) \subset~F_n$, $\forall n \in \NM$ ;
\item the map $u_\bullet$ induces morphisms of scaled vector spaces $u_n:E_n \to F_n$.
\end{enumerate}
 This defines the {\em category of pre-Arnold spaces.}  In Arnold spaces, $\tau$-morphisms and $k$-bounded $\tau$-morphism are defined componentwise. We define the {\em norm} of a  $k$-bounded $\tau$-morphism 
$$u_\bullet:E \to F $$
as the sequence 
$$N^k_\tau(u_\bullet):=(N^k_\tau(u_n)). $$
So it is not a norm in the usual sense of it, but rather a sequence of norms.
 
 The filtrations defined for scaled vector spaces extend naturally to  pre-Arnold spaces, for instance~:
$$x_{\bullet} \in E^{(k)}_\bullet \iff x_n \in (E_n)^{(k)},\ \forall n \in \NM.$$

\begin{definition} An $S$-Arnold space $E_\bullet$ (or simply an Arnold space) is a directed system of $S$-scaled vector spaces indexed by  $\NM \cup \{ \infty \}$ and such that the morphisms of the directed system
$$r_{ij}:E_i \to E_j,\ i,j \in \NM \cup \{ \infty \},\ i <j $$
are $0$-bounded with norm at most one.
\end{definition}
Note that $E_\infty$ is   the limit of the directed system. 

The maps $r_{nm}$ are  called {\em restriction morphisms}. For simplicity, they are assumed to be $0$-bounded, this condition can be relaxed by $k$-bounded for arbitrary $k \geq 0$. 

The {\em category of Arnold spaces} is the full subcategory of pre-Arnold spaces having for objects Arnold spaces. In particular, a morphism of Arnold spaces does NOT necessarily commute to restriction mappings.

\begin{example} Let $K_n \subset \RM^d $ be a decreasing sequence of compact subsets and put $K_\infty=\bigcap_n K_n$. Consider the Banach space $C^0(K_n,\RM)$ together with the trivial scaling. The product space 
$$E_\bullet:=\prod_{n \in\NM \cup \{ \infty\}}E_n,\ E_n:= C^0(K_n,\RM)$$ is a  pre-Arnold spaces space.
The maps
$$C^0(K_{i},\RM)  \to C^0(K_{i+j},\RM) , f \mapsto f_{|K_{i+j}} $$
induce restriction mappings $r_{i\, i+j}$. This induces an Arnold space structure on $E_\bullet$. 
\end{example}

Here are some conventions to simplify the notations~:
\begin{enumerate}[{\rm i)}]
\item we use the notation $r_m$  for the restriction map from $E_n$ to $E_m$ instead of $r_{n m}$ ;
\item we use the notation $r$  for the restriction map from $E_n$ to $E_\infty$ instead of $r_{n \infty}$ ;
\item for $x \in E_\bullet$ which projects to $(E_n)_s$, we denote by $| x |_{n,s}$ the norm of its projection ;
\item given morphisms $u:E_n \to E_n$, $v:E_{n+k} \to E_{n+k}$ we write $vu$ for the composed map $ v\,r_{n+k}\,u$.
\end{enumerate}
 
Note that there is a natural functor from the category of scaled vector spaces to that of Arnold spaces by taking the product with itself 
 $$F:SVS \to AS,\ E \mapsto E_\bullet:=E\times E \times E \times \cdots $$
 Restriction mappings are simply identity mappings.
 
In particular any linear group action on a scaled vector space might be seen as an action on the associated Arnold space. In concrete example,
this explains why diophantine conditions can  sometimes be relaxed to Bruno type conditions in small denominator problems. For instance, if we use this functor to prove Siegel's linearisation theorem for vector fields at singular points, then one can show that the diophantine condition is replaced by Bruno's condition~\cite{Brjuno,Siegel_vecteurs}. If we use it in Kolmogorov's invariant tori theorem then we end up with the {\em tameness condition} which will be explained in the sequel.
\section{Convergence of infinite products}
We now arrive at our first important theorem. The theorem that we shall now prove gives a criterion for proving the convergence for
the KAM iterative process. It shows the relevance of exponential maps in scaled vector spaces.
\begin{theorem}
\label{T::produits}
Let $u_\bullet:E_\bullet \to E_\bullet$ be a $1$-bounded $\tau $-morphism of an Arnold space. Assume that there exists $\l \in ]0,1[$ such that
the norm of $u$ satisfies the estimate 
$$\sum_{n \geq 0} N^1_{s}(u_n)\leq (1-\l)s$$
for any $s \leq \tau$, then the sequence $(r\,g_n)$ defined by 
$$g_n=e^{u_n}  \cdots  e^{u_1}  e^{u_0}$$
 converges in  $\Lt(E_0,E_\infty)$ and maps continuously the Banach space $(E_0)_s$ to $(E_\infty)_{\rho s}$ with $\rho=\l(1-\l)$.
\end{theorem}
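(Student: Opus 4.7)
The plan is to define a decreasing sequence of scales $s_0 := s > s_1 > s_2 > \cdots$ with $s_n \downarrow \rho s$, and apply Proposition~\ref{P::exp} at each step to $e^{u_n}$ with source $s_n$ and destination $s_{n+1}$. Setting $\sigma_n := s_n - s_{n+1}$ and $\nu_n := N^1_{s_n}(u_n)/\sigma_n$, the proposition gives $|e^{u_n} y|_{s_{n+1}} \leq (1-\nu_n)^{-1}|y|_{s_n}$. Composing alternately with the restriction maps $r_{n, n+1}$, which are of norm at most one, induction on $n$ yields
$$|g_n x|_{n, s_{n+1}} \;\leq\; \Bigl(\prod_{k=0}^n (1-\nu_k)^{-1}\Bigr) |x|_s,$$
and since $\rho s \leq s_{n+1}$ and $r_{n, \infty}$ is also of norm at most one, the same bound transfers to $|r g_n x|_{\infty, \rho s}$.

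The crux is to choose the $\sigma_n$ so that simultaneously $\sum_n \sigma_n \leq (1-\rho)s$ (ensuring $s_n \downarrow \rho s$) and $\sum_n \nu_n < \infty$ (so that the infinite product $P := \prod_n(1-\nu_n)^{-1}$ is finite). The algebraic identity $(1-\rho)-(1-\lambda)=\lambda^2$ shows that the shrinkage budget exceeds the total ``weight'' $\sum_n N^1_s(u_n) \leq (1-\lambda)s$ by exactly $\lambda^2 s$. I would decompose
$$\sigma_n \;=\; \frac{N^1_{s_n}(u_n)}{\lambda} + \delta_n, \qquad \sum_n \delta_n \leq \lambda^2 s,$$
where the first summand caps $\nu_n$ at $\lambda$, while the positive reserves $\delta_n$, distributed as a geometrically decaying sequence tuned to $(N^1_{s_n}(u_n))_n$, force $\nu_n \to 0$ rapidly enough that $\sum \nu_n$ is finite. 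This combinatorial balance is the main obstacle: the summability hypothesis on the $N^1_s(u_n)$ is quite lean, and the vanishing of $\rho$ at $\lambda \in \{0,1\}$ reflects the tightness of the reserve budget $\lambda^2 s$ in those extremes.

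Once the uniform estimate $|r g_n x|_{\infty, \rho s} \leq P|x|_s$ is in hand, convergence of $(r g_n x)$ in the Banach space $(E_\infty)_{\rho s}$ follows from the Cauchy criterion. The telescoping identity
$$r g_{n+1} x - r g_n x \;=\; r_{n+1, \infty}\,(e^{u_{n+1}} - \Id)\, r_{n, n+1}\, g_n x,$$
combined with estimate~3) of Proposition~\ref{P::exp} applied to $-u_{n+1}$, gives $|r g_{n+1} x - r g_n x|_{\infty, \rho s} \leq 2\nu_{n+1} P |x|_s$, and summability of $(\nu_n)$ yields absolute convergence of the telescoping series. The limit is the desired morphism in $\Lt(E_0, E_\infty)$, continuous from $(E_0)_s$ into $(E_\infty)_{\rho s}$ with operator norm at most $P$.
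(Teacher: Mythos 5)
Your overall architecture (a uniform bound on the partial products plus a telescoping Cauchy estimate) is the same as the paper's, but the way you propose to get the uniform bound contains a genuine gap, and in fact the step you defer cannot be carried out under the stated hypothesis. You need a sequence of gaps $\sigma_n>0$ with $\sum_n\sigma_n\le (1-\rho)s$ and $\sum_n\nu_n<\infty$, where $\sigma_n\nu_n=N^1_{s_n}(u_n)$ (finiteness of $\prod_n(1-\nu_n)^{-1}$ is equivalent to $\sum_n\nu_n<\infty$). By the arithmetic--geometric mean inequality, $\sqrt{N^1_{s_n}(u_n)}\le\tfrac12(\sigma_n+\nu_n)$, so any admissible choice forces $\sum_n\sqrt{N^1_{s_n}(u_n)}<\infty$. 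The hypothesis only gives $\sum_n N^1_s(u_n)\le(1-\lambda)s$, which does not imply square-root summability: take for instance the trivial Arnold space built from a trivially scaled Banach space and $u_n=\epsilon_n T$ with $T$ a fixed bounded operator and $\epsilon_n\asymp 1/(n\log^2 n)$ suitably normalised; then $N^1_{s'}(u_n)\asymp s'\epsilon_n$, so the hypothesis holds while $N^1_{s_n}(u_n)\ge\rho\,N^1_s(u_n)$ and $\sum_n\sqrt{N^1_{s_n}(u_n)}=\infty$, so no choice of $(\sigma_n)$ works. Your concrete allocation $\sigma_n=N^1_{s_n}(u_n)/\lambda+\delta_n$ also violates the budget outright for small $\lambda$, since $\tfrac1\lambda\sum_n N^1_{s_n}(u_n)$ may be as large as $\tfrac{(1-\lambda)s}{\lambda}$, which exceeds $(1-\rho)s=(1-\lambda+\lambda^2)s$ for every $\lambda\in\,]0,1[$. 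So the ``combinatorial balance'' you flag as the main obstacle is not merely unproved; the per-step loss-of-scale scheme cannot yield a uniform bound from the theorem's lean hypothesis.

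The paper circumvents this by never distributing the gap among the factors. In Lemma~\ref{L::produits} it expands the whole product $g_n$ into monomials $u[I]$, $I\in C_{j,n}$, estimates each degree-$j$ monomial across the \emph{single} gap from $s$ to $\lambda s$ using the factorial gain $\tfrac1{j!}N^j_s(u[I])\le\prod_p N^1_s(u_{i_p})$ (subdivide the interval into $j$ equal pieces and use Stirling, exactly as in the proof of Proposition~\ref{P::exp}), and resums via $\sum_j\alpha^j\bigl(\sum_k z_k\bigr)^j=\bigl(1-\alpha\sum_k z_k\bigr)^{-1}$ with $\alpha=((1-\lambda)s)^{-1}$; this gives $|g_nx|_{n,\lambda s}\le C\,|x|_{s,0}$ with a constant depending only on $\sum_i N^1_s(u_i)/((1-\lambda)s)$, so only linear summability of the norms is used. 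The Cauchy step then writes $g_n-r_ng_{n-1}=(e^{u_n}-\Id)\,r_ng_{n-1}$ and spends the one remaining \emph{fixed} gap $\lambda s\to\rho s$ (of size $\lambda^2 s$, the same for every $n$) on the single factor $e^{u_n}-\Id$, so $\sum_n N^1(u_n)<\infty$ suffices; your telescoping identity is fine, but it inherits the unattainable uniform bound. To repair your argument you would have to either strengthen the hypothesis to $\sum_n\sqrt{N^1_s(u_n)}<\infty$ or switch to the resummation-over-monomials argument.
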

\begin{proof}
 To prove the theorem, we start with the
 \begin{lemma} 
\label{L::produits}
Let $u_\bullet$ be a $1$-bounded $\tau$-morphism such that $N_s^1(u_i) ~<~s $ for any $i \geq 0$.
For any $s \leq \tau$ and any $x \in (E_0)_s$, we have the estimate
$$| g_n x |_{n,\l s} \leq   \frac{1}{1-\frac{1}{(1-\l)s}\sum_{i \geq 0} N_s^1(u_i)} | x |_{s,0} $$
provided that $\l$ is such that
$$\max_{i \leq n}\frac{1}{(1-\l)s}N_s^1(u_i)<1. $$
 \end{lemma}
\begin{proof}
Denote by $C_{j,n} \subset \ZM^j$, the multi-indices $ I=(i_1,\dots,i_j)$ with coordinates in
 $\{0,\dots,n \}$.  We have the formula
 $$ \frac{1}{1-\a(\sum_{k=0}^n z_k)}=\sum_{j \geq 0} \a^j \sum_{I \in C_{j,n}}  z_I,\ z_I:=z_{i_1}z_{i_2}\cdots z_{i_j}$$
 for any $\a \in \RM$, $I=(i_1,i_2,\dots,i_n)$.

For $I=(i_1,\dots,i_j) \in C_{j,n}$, we denote by $\s(I)$ the vector whose components are obtained by permuting those of $I$ in such a way that
$\s(I)_p \geq \s(I)_{p+1}$. 

We put
$$u[I]:= u_{\s(I)_1} u_{\s(I)_2} \cdots u_{\s(I)_j},\ I \in C_{j,n}. $$
We expand $g_n$ and collect the terms in the following way~:
$$g_n=\sum_{j \geq 0}\frac{1}{j!} (\sum_{I \in C_{j,n}}  u[I]) =1+\sum_{i=0}^n u_i+\frac{1}{2}(\sum_{i=0}^n u_i^2
+\sum_{j=0}^n \sum_{i=j+1}^n 2 u_i u_j) +\dots .$$

Fix $s $ and put 
$$z_I:=N_s^1(u_{i_1})N_s^1(u_{i_2})\cdots N_s^1(u_{i_n}).$$
The inequality
$$\frac{1}{j!}N_s^j( u[I])  \leq \prod_{p=0}^jN_s^1(u_{i_p})=  z_I$$
implies that
$$\left| u[I] (x)\right|_{n,\l s} \leq   \left(\frac{1}{(1-\l)s} \right)^j z_I  | x|_{s,0},\ \forall \l \in ]0,1[. $$
Put
$$ \a=\frac{1}{(1-\l)s},$$
we get the estimate
$$| g_n x |_{n,\l s} \leq  \left( \sum_{j \geq 0}\a^j \sum_{I \in C_{j,n}}  z_I \right)| x|_{s,0}=\frac{1}{1-\a(\sum_{k=0}^n z_k)}| x|_{s,0}  .  $$
This proves the lemma.
 \end{proof}
  
  Fix  $s \in ]0,\tau]$ and denote by   $\| \cdot \|_{\a,n}$ the operator norm in the vector space $\Lt( (E_0)_s,(E_n)_{\a s})$.   
  
 The preceding lemma gives the estimate
 $$ \| g_n \|_{\l,n} \leq C,\  C:= \frac{1}{1-\frac{1}{(1-\l)s}\sum_{i \geq 0} N_s^1(u_i)}.$$
 Thus the sequence $(g_n)$ defines by a uniformly bounded sequence in $\Lt((E_0)_s,(E_\infty)_{\l s})$.  
 
   We have
   $$\sup_{i \geq 0} N_{\l s}^1(u_i) <\rho s ,\ \rho:=(1-\l)\l .$$ 
  
 I assert that the sequence with elements $ \| g_n-r_ng_{n-1}\|_{\rho,n }$ is convergent.  To see it write
$$g_n-r_n g_{n-1}=(e^{u_n}-\Id)r_n g_{n-1} $$
where $\Id_n \in \Lt(E_n)$ denotes the identity mapping.

By Proposition~\ref{P::exp}, we get that~:
$$ | (e^{ u_n}-\Id_n) y |_{n,\rho s} \leq  \left( \sum_{j \geq 0} \frac{ (  N_{\l s}^1(u_n))^{j+1} }{(\rho s )^{j+1} } \right) | y |_{n,\l s}=
\frac{1}{1-\mu-\frac{N^1_{\l s}(u_n)}{\l s}} \frac{N^1_{\l s}(u_n)}{\l s} | y |_{n,\l s} .$$
 Take $y=g_n x$,  this gives the estimate
$$ \| (e^{ u_n}-\Id_n) g_{n-1} \|_{\rho,n} \leq   \frac{ C}{1-\l-\frac{N^1_{\l s}(u_n)}{\l s}} \frac{N^1_{\l s}(u_n)}{\l s}.$$
We proved that
 $$ \| g_n-r_ng_{n-1}\|_{\rho,n} \leq  K \frac{N^1_{\l s}(u_n)}{\l s}$$
with
$$K:=\sup_{n \geq 0} \frac{ C}{1-\l-\frac{N^1_{\l s}(u_n)}{\l s}}. $$
Thus the sequence $(rg_n),\ n \geq j$ defines a Cauchy sequence in the Banach space $\Lt((E_0)_s,(E_\infty)_{\rho s})$.  
 This proves the theorem.
 \end{proof}

 \section{The abstract KAM theorem}
 The notions of morphisms and bounded morphisms extend to pre-Arnold spaces but these are not be sufficient to prove the convergence of our iterative procedure of the abstract KAM theorem. So we need a third notion which generalises the diophantine condition of the KAM theorem~: tameness.
 
 Unlike the diophantine condition  which  can be seen as a notion for scaled vector spaces, tameness is specific to morphisms between  Arnold spaces.  
  
  We say that an increasing real positive sequence $p:=(p_n)$ is {\em tamed} if
  $$\sum_{n \geq 0} \frac{\log p_n'}{2^n}<+\infty,\ p_n':=\max(1,p_n). $$  
  For instance, the sequence $(e^{A^n})$ is tamed provided that $A<2$. This condition was introduced by Bruno in the context of diophantine approximation for linearising vector fields~\cite{Brjuno}.  
  
 \begin{definition}
 A  $k$-bounded $\tau$-morphism between pre-Arnold spaces
 $$u_\bullet=(u_n):E_\bullet \to F_\bullet $$
 is called $k$-tamed if the sequence  $N^k_{\tau}(u_\bullet)=(N^k_{\tau}(u_n))$ is tamed.
 \end{definition}
We denote by $\Mt^k(E_\bullet,F_\bullet)$ the vector space of $k$-tamed morphisms and for $E_\bullet=F_\bullet$, we write $\Mt^k(E_\bullet)$ instead of $\Mt^k(E_\bullet,E_\bullet)$. It is a vector subspace of $k$-bounded morphisms, therefore it admits a natural filtration 
$$\Mt^k(E_\bullet,F_\bullet)\subset \Mt^k(E_\bullet,F_\bullet)^{(1)} \subset \Mt^k(E_\bullet,F_\bullet)^{(2)} \subset \cdots $$ 
Observe that we have an inclusion of vector spaces
$$\Mt^k(E_\bullet,F_\bullet) \subset \Bt^k(E_\bullet,F_\bullet) \subset \Lt(E_\bullet,F_\bullet) $$
for any $k \geq 0$.

We now give a nonlinear definition of tameness.   Given a sequence $r=(r_n)$, we define the $k$-ball centred at the origin in $E_\bullet$ of radius $r$ by putting
$$B(r,k)=\bigcup_t B(r,k)_{n,t},\ B(r,k)_{n,t}=\{ x \in E_{n,t}: | x |_{n,t} < r_n t^k \}. $$
\begin{definition} Let $r$ be a real sequence and $k$ a positive integer. A map of Arnold spaces
$f:E_\bullet \supset B(r,k) \to F_\bullet$ is tamed by a sequence $(p_n)$, $p_n^{-1}<r_n$, if there exists $C>0$ such that
 $$ | x |_{n,t} \leq \frac{(t-s)^k}{p_n} \implies | f(x)|_{n,s} \leq C$$
 and $(p_n)$ is tamed.
  \end{definition}
A tamed (linear) morphism is, of course, a tamed mapping.  

If a map is tamed by a sequence, then this sequence is not unique. It
can nevertheless be chosen in the following appropriate way~:
\begin{proposition} 
\label{P::tamed} Let $r$ be a real sequence and $k$ a positive integer and 
$f:E_\bullet \supset B(r,k) \to F_\bullet$ a map of Arnold spaces tamed by a sequence $(p_n)$. For any $C \geq 1$ and any $A \in ]1,2[$, the map is tamed
by a sequence $(q_n)$ such that
\begin{enumerate}[{\rm i)}]
\item $q_n \geq e^{A^n}, \forall n \in \NM$ ;
\item $q_n^2 \geq C q_{n+1}, \forall n \in \NM$ ;
\item there exists $N$ such that $q_n=p_n,\ \forall n>N$.
\end{enumerate}
\end{proposition}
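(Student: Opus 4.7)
The plan is to build $(q_n)$ by two successive modifications of $(p_n)$, each preserving the property that $f$ is tamed. The key elementary observation used throughout is: if $(q_n)$ is tamed, satisfies $q_n \geq p_n$ for all $n$, and $q_n^{-1} < r_n$, then $f$ is tamed by $(q_n)$ with the same constant $C$ as in the hypothesis---indeed, the defining implication has a more restrictive hypothesis (smaller ball) on the left, so the same conclusion holds.

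First, to secure condition (i), I set $\tilde p_n := \max(p_n, e^{A^n})$. This is still an increasing positive sequence with $\tilde p_n \geq p_n$, and it is tamed because
$$\sum_{n \geq 0} \frac{\log \tilde p_n'}{2^n} \leq \sum_{n \geq 0} \frac{\log p_n'}{2^n} + \sum_{n \geq 0} \frac{A^n}{2^n} < +\infty,$$
since $A < 2$. Thus $f$ is tamed by $(\tilde p_n)$, and (i) is now satisfied; from here on, the sequence $p_n$ appearing in (iii) should be read as this modified $\tilde p_n$. Next, to secure (ii) on a tail, I would choose $N$ large enough that $\tilde p_n^2 \geq C \tilde p_{n+1}$ holds for every $n > N$. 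In the regime where $\tilde p_n = e^{A^n}$ this is immediate, since $\tilde p_n^2/\tilde p_{n+1} \geq e^{(2-A)A^n}/\max(1,C)$ diverges with $n$; in the regime where $\tilde p_n = p_n$, one can force the inequality on a tail by an auxiliary smoothing step that replaces $\tilde p_n$ by $\max(\tilde p_n, \sqrt{C\, \tilde p_{n+1}})$ and iterates, the key point being that this enlargement remains controlled by the tamed sum and thus preserves tamedness.

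Finally, for $n \leq N$, I define $q_n$ by a backward recursion starting from $q_{N+1} := \tilde p_{N+1}$ and setting
$$q_n := \max\bigl(\tilde p_n, \sqrt{C\, q_{n+1}}\bigr), \quad n \leq N.$$
By construction $q_n^2 \geq C q_{n+1}$ for every $n$, while $q_n \geq \tilde p_n \geq e^{A^n}$ preserves (i). Since $q_n = \tilde p_n$ for $n > N$ and only finitely many initial terms are altered, $(q_n)$ is tamed and $f$ is tamed by it, establishing (iii). The main obstacle in carrying out this plan rigorously is the second stage above: verifying that (ii) can be forced on a tail without further modifying the sequence in infinitely many places requires a careful interaction between the tame growth $\sum \log \tilde p_n'/2^n < \infty$ and the exponential lower bound $e^{A^n}$, in particular to rule out large-ratio jumps $\tilde p_{n+1}/\tilde p_n^2$ occurring infinitely often.
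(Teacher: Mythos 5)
Your first stage (replacing $(p_n)$ by $\max(e^{A^n},p_n)$, with the check that $A<2$ preserves tamedness) and your last stage (repairing finitely many initial indices) are essentially what the paper does; the paper overwrites the terms below some $N$ by the constant $C\max_{i\le N}p_i$, whereas you use the backward recursion $q_n=\max(\tilde p_n,\sqrt{C\,q_{n+1}})$, and your opening remark that enlarging the taming sequence keeps $f$ tamed (smaller ball on the left of the implication, and $q_n^{-1}\le p_n^{-1}<r_n$) is the correct justification for both.

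The genuine gap is exactly the point you flag at the end and then leave open: to make (ii) compatible with (iii) you must prove that the modified sequence already satisfies $\tilde p_n^{\,2}\ge C\tilde p_{n+1}$ for all $n$ beyond some $N$, i.e.\ that jumps $\tilde p_{n+1}>\tilde p_n^{\,2}/C$ occur only finitely often. Your fallback device --- iterating the smoothing $\max(\tilde p_n,\sqrt{C\,\tilde p_{n+1}})$ along the tail --- cannot be used here: it changes infinitely many terms and so destroys (iii) (and its compatibility with tamedness is itself not checked), so without the tail property the argument does not close. For comparison, the paper disposes of this step with the bare assertion that a tamed sequence satisfies $p_n^2\ge Cp_{n+1}$ for all $n\ge N$; no proof is given there, and your worry is well founded, since tamedness alone (even together with $p_n\ge e^{A^n}$) does not force it. Indeed, take $c_n=\log p_n$ constant equal to $A^{n_{k+1}}$ on blocks $n_k\le n<n_{k+1}$ with $n_{k+1}\approx\theta n_k$ for some fixed $\theta$ with $1<\theta<\log 2/\log A$: then $\sum_n c_n2^{-n}<\infty$ and $c_n\ge A^n$, yet at the block boundaries $c_{n+1}/c_n=A^{(\theta-1)n_{k+1}}\to\infty$, so $p_{n+1}>p_n^2/C$ infinitely often and no choice of $(q_n)$ satisfying both (ii) and (iii) exists. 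So the step you isolate as the main obstacle is not a routine verification but the crux of the statement: it requires either a genuine argument or an extra hypothesis on $(p_n)$, and neither your proposal nor, for that matter, the paper's own one-line assertion supplies it.
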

\begin{proof}
The proof is elementary. For instance, to get i) just replace $(p_n)$ by
$$\max(e^{A^n},p_n). $$
To get ii) and iii), note that if $(p_n)$ is tamed then there exists $N$ such that
$$p_n^2 \geq C p_{n+1},\ \forall n \geq N .$$
We replace $(p_n)$ by
$$p'_n=C\max_{i \leq N} p_i, \forall n < N  $$
and $p'_n=p_n$ for $n \geq N$. This proves the proposition.
\end{proof}

We now define approximated inverses of linear mappings in Arnold spaces. To do this, we first introduce a new filtration on a scaled vector space~:
 the {\em harmonic filtration}. The filtration depends on the choice of $d \geq 0$. For a given scaled vector space $E$, the terms of the filtration are defined by
$$ \Ht_d^k(E_t)=\{ x \in E_t:    | x |_s  \leq \frac{1}{(t-s)^d} \left(\frac{s}{t}\right)^{2^k}| x |_t,\ \forall s<t   \}.$$   
In the classical KAM theorem, it is the filtration with respect to the degrees of the harmonics appearing in the Fourier expansion of a function.

We may now proceed to the definition of a quasi-inverse~:
\begin{definition} Let $u:E_\bullet \to F_\bullet$ be a morphism of Arnold spaces. A right $d$-quasi inverse of a morphism $u:E_\bullet \to F_\bullet$  is a morphism $v:F_\bullet \to E_\bullet$ such that
$$ y-uv(y) \in \Ht^{\,n}_d(F_n),\ {\rm for\ any\ } n \in \NM,y \in F_n .$$
\end{definition}

There is a natural notion of bounded splitting in an Arnold space that we shall also use in our formulation of the theorem~:
if $E_\bullet$ is an Arnold space, we say that $F_\bullet,G_\bullet$ define a {\em $k$-bounded splitting of $E_\bullet$} if
\begin{enumerate}[{\rm i)}]
\item $E_\bullet$ is the direct sum of $F_\bullet$ and $G_\bullet$ ;
\item  the subspaces are $F_\bullet$ and $G_\bullet$ are closed ;
\item the projections on each factor is $k$-bounded with norm at most one.
\end{enumerate}
\begin{theorem}
 \label{T::KAM} Let $E_\bullet$ be an Arnold space, $M_\bullet $ a closed subspace of $E_\bullet^{(\mu)}$ for some $\mu \geq 0$ and
 $F_\bullet,G_\bullet$ an $m$-bounded splitting of $M_\bullet $, for some $m \geq 0$. Consider a vector subspace $\alg_\bullet \subset \Mt^1(E_\bullet)^{(2)}$ which leaves   $G_\bullet$ invariant and let $a \in E_0$ be such that $\alg_\bullet$ maps $a$ in $G_\bullet$.  We consider the linear maps
 $$\rho(\a): \alg_\bullet \to G_\bullet,  u \mapsto u (a+\a) .$$
Assume that for some $k,l \geq 0$ and some sequence $r$~:
 \begin{enumerate}[{\rm A)}]
\item for each $\a \in F_\bullet$, there is a $k$-tamed right quasi-inverse $j(\a) \in \Mt^k( G_\bullet,\alg_\bullet)$ of $\rho(\a)$ ;
\item the map   $j:F_\bullet \supset B(r,l) \to \Mt^k( G_\bullet,\alg_\bullet),\ \a \mapsto j(\a)$ is $l$-tamed ;
\item $\mu > 2k+l+m+2  $
\end{enumerate}
then for any $b \in M_0 $ and any $A \in ]1,2[$ , there exists  a $1$-tamed morphism $u_\bullet=(u_n)$ of $E_\bullet$ such that
 \begin{enumerate}[{\rm i)}]
 \item $u_i \in \alg_i$ ;
 \item   $N^1_s(u_n)< s^{k+l+m+2}e^{-A^n} $ for any $s$ sufficiently small and any $n \geq 0$~;
 \item $  g(a+b)=r(a) (\mod F_\infty) $ where   $g$ is the limit of the sequence $(e^{u_n} e^{u_{n-1}} \dots  e^{u_1}  e^{u_0})_{n \in \NM} \subset \Lt(E_0,E_\infty)$.  \end{enumerate}
 \end{theorem}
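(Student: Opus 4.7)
My plan is to carry out a Newton--Kolmogorov iteration and then apply Theorem~\ref{T::produits} to pass to the limit. Setting $b_0 := b \in M_0$, suppose that $g_{n-1} := e^{u_{n-1}}\cdots e^{u_0}$ sends $a+b$ to $a+b_n \in E_n$ (where $a$ is identified with its restrictions). Decompose $b_n = \a_n + \g_n$ using the $m$-bounded splitting of $M_\bullet$, and set
$$ u_n := -\,j(\a_n)(\g_n) \in \alg_n. $$
By the quasi-inverse property (A), $u_n(a+\a_n) = -\g_n + h_n$ with $h_n \in \Ht^{\,n}_d(G_n)$. Since $u_n \in \alg_\bullet \subset \Mt^1(E_\bullet)^{(2)}$, Proposition~\ref{P::exp} applies once $N^1_s(u_n)$ is small; expanding $e^{u_n}(a+b_n) - a$ and substituting $u_n(a+\a_n) = -\g_n + h_n$ (and using that $\alg_\bullet$ leaves $G_\bullet$ invariant) yields
$$ e^{u_n}(a+b_n) - a \;=\; \a_n \;+\; h_n \;+\; u_n(\g_n) \;+\; \sum_{j\ge 2}\tfrac{u_n^j}{j!}(a+b_n). $$
After restriction to $E_{n+1}$ this gives the new error $b_{n+1}$, which splits into the unaffected $F$-part $\a_n$, the harmonic remainder $h_n$ (super-exponentially small at a smaller radius), and a remainder quadratic in $u_n$.

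The core of the argument is the norm bookkeeping. By hypotheses (A), (B) and Proposition~\ref{P::tamed}, both the tameness sequence $(p_n)$ controlling $j(\a)$ (uniformly for $\a$ in the relevant ball) and that controlling the variation $\a \mapsto j(\a)$ may be chosen to satisfy $p_n \geq e^{A^n}$, $p_n^2 \geq C\,p_{n+1}$, and to coincide with the originally given sequences past some index. Fix a geometrically decreasing sequence of radii $s_n \searrow s_\infty > 0$. Combining Proposition~\ref{P::exp} applied to $e^{u_n}$, the $m$-bounded splitting to control the projections, the vanishing-at-order-$2$ of $\alg_\bullet$, the $k$-loss from the quasi-inverse, the $l$-loss from the Lipschitz-tameness of $j$, and the harmonic decay
$$ |h_n|_{n,s_{n+1}} \;\leq\; (s_n-s_{n+1})^{-d}\,(s_{n+1}/s_n)^{2^n}\,|h_n|_{n,s_n},$$
one derives a Newton-type recursion of the schematic shape
$$ \e_{n+1} \;\leq\; K\,p_n^2\,s_n^{-(2k+l+m)}\,\e_n^2 \;+\; K\,e^{-c\,2^n}\,\e_n,$$
where $\e_n := s_n^{-(k+l+m+2)}\,N^1_{s_n}(u_n)$. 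The constraint (C), $\mu>2k+l+m+2$, is calibrated exactly to start the bootstrap: since $b \in E_0^{(\mu)}$, the initial $\e_0 \lesssim s_0^{\mu-(k+l+m+2)}$ can be made small by shrinking $s_0$, and the quadratic recursion then propagates $\e_n \leq e^{-A^n}$, which is property (ii).

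With (ii) in hand, $\sum_n N^1_s(u_n) = O(s^{k+l+m+2})\sum_n e^{-A^n} \ll s$ for $s$ small, so the hypothesis of Theorem~\ref{T::produits} is met and $g := \lim g_n$ exists in $\Lt(E_0,E_\infty)$. For (iii), the telescoping identity shows that $r(g_n(a+b)-a) - \a_n$ is controlled, modulo $F_\infty$, by the harmonic remainder and the quadratic remainder of the previous step, both of which vanish in $E_\infty$ by the superconvergence; since $\a_n \in F_\bullet$, its image in $E_\infty$ lies in $F_\infty$, proving $g(a+b) = r(a)\ (\mod F_\infty)$. The main obstacle will be the three-way synchronisation in the central estimate: the quadratic Newton gain has to beat the compounded tameness loss $p_n^2 \sim e^{2A^n}$ (possible since $A<2$, so $e^{2A^n} \ll 2^{2^n}$), while the surplus powers of $s$ provided by hypothesis (C) must absorb the $s^{-(2k+l+m)}$ loss at every step \emph{and} still leave the exact excess $s^{k+l+m+2}$ appearing in (ii).
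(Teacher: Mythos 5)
Your overall skeleton is the same as the paper's: a Kolmogorov--Arnold iteration in which at each step the $G$-part of the error is removed by the quasi-inverse, the conjugation is done with the exponential of Proposition~\ref{P::exp}, the new error splits into a quadratic part, a cross term and the harmonic remainder of the quasi-inverse, hypothesis (C) is used only to initialise the smallness by shrinking $s_0$, and convergence of the products $e^{u_n}\cdots e^{u_0}$ is obtained from Theorem~\ref{T::produits}. But the place you flag as ``the main obstacle'' is exactly where your proposed mechanism fails, and it is the actual content of the paper's proof. First, your treatment of the harmonic remainder is inconsistent: you want radii $s_n\searrow s_\infty>0$ (forced, since the limit must land in some $(E_\infty)_{\rho s}$ with $\rho>0$), but then $(s_{n+1}/s_n)^{2^n}$ is not automatically of size $e^{-c2^n}$ --- with ratios tending to $1$ it need not be small at all, and ``geometrically decreasing with positive limit'' is self-contradictory. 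The paper's key idea, which is missing from your proposal, is to \emph{tie the loss of radius to the tameness sequence}: it sets $\rho_n=p_{n+1}^{-1/2^n}$ and $s_{n+1}=\rho_n^5 s_n$, so that the harmonic gain $\rho_n^{2^n}=p_{n+1}^{-1}$ is a power of the tameness sequence, exactly calibrated against the losses $p_n$ coming from hypotheses (A) and (B), while the Bruno condition $\sum 2^{-n}\log p_n<\infty$ is precisely what keeps $\prod_n\rho_n>0$, i.e.\ $s_\infty>0$. In addition, because the exponent furnished by the quasi-inverse at step $n$ is only $2^n$, an ultraviolet cutoff is needed at the early steps: the paper introduces $\phi$ and chooses $N$ so that Lemma~\ref{L::cutoff} gives $|\gamma_n|_{\rho_n s}\leq \sigma_{n+1}^{m}\varepsilon_n|\gamma_n|_s$ uniformly in $n$; nothing in your scheme plays this role.

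Second, your bootstrap does not close as stated. You propose to propagate $\varepsilon_n\leq e^{-A^n}$ through $\varepsilon_{n+1}\leq K p_n^2 s_n^{-(2k+l+m)}\varepsilon_n^2+Ke^{-c2^n}\varepsilon_n$, arguing that $e^{2A^n}\ll 2^{2^n}$. But $p_n\geq e^{A^n}$ is only a lower bound you imposed via Proposition~\ref{P::tamed}; a tamed sequence can grow far faster than $e^{A^n}$ (anything with $\sum 2^{-n}\log p_n<\infty$), and then $p_n^2\varepsilon_n^2$ with $\varepsilon_n=e^{-A^n}$ is not small, so the quadratic term does not reproduce $e^{-A^{n+1}}$. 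The paper avoids this by measuring smallness \emph{against the tameness sequence itself}, $\varepsilon_n=p_n^{-1}\sigma_n^{2k+l+m+2}$, and by using Proposition~\ref{P::tamed} to arrange $p_n^2\geq C^2p_{n+1}2^{4k+2l+2m+5}$, which is what makes $C^2\varepsilon_n^2\leq\varepsilon_{n+1}\sigma_{n+1}^{2k+l+m+2}$ and hence the Newton step close; conclusion (ii) then follows a posteriori from $p_n\geq e^{A^n}$, rather than being the quantity propagated. So the inductive estimates i)--iii) of the paper (on $\alpha_n,\beta_n,\gamma_n$), the choice of $\sigma_n$, $\rho_n$, $\varepsilon_n$ and of the cutoff $N$ constitute the missing core of your argument, not a routine bookkeeping step.
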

 \section{Proof of the abstract KAM theorem}
We use an iteration similar to the one introduced by Kolmogorov and modified by Arnold in the proof of the KAM theorem~\cite{Arnold_KAM,Kolmogorov_KAM}.
 
 \subsection*{The iterative process}
 The iteration depends on the choice of  some appropriate integer $N$, that will be fixed later.
We define
$$\phi:\NM \to \NM $$
by
$$\phi(n)=\left\{\begin{matrix} N & {\rm for \ }& n \leq N \\
                                                 n & {\rm for \ }& n > N \end{matrix} \right. $$
 Denote respectively by $\pi_F:M \to F ,\ \pi_G:M \to G $ the projections to $F$ and $G$.
                                                
 We define inductively the sequences $(\b_n)$, $(u_n)$ by putting
$$ \b_0=b,\ u_0=j(0)(b) ;$$
$$ \left\{ \begin{matrix}
 \b_{n+1}&=&e^{-u_n} (a_n+\b_n)-a_{n+1} ;\\
  u_{n+1}&=&j(\sum_{i=0}^n\a_i)\pi_G( \b_{n+1}) \end{matrix} \right.$$ 
 where
$$\left\{ \begin{matrix}   \a_{n}&=&\pi_F(u_n(a_n)-\b_n)&=&\pi_F(-\b_n) ; \\
\g_n&=&\pi_G(u_n(a_n)-\b_n)\\  a_{n+1}&=& a_n+\a_n .\end{matrix} \right.  $$  
 I assert that if $(u_n)$ tends exponentially fast to zero as stated in part i) of the theorem and if $(\b_n)$ tend to zero then this implies ii) and concludes the proof of the theorem.
 Indeed,  by Theorem~\ref{T::produits} the sequence
$$g_n:=e^{u_n}   e^{u_{n-1}} \dots  e^{u_1}   e^{u_0}$$
converges to a morphism $g$. By  taking the image under $r$  in the equality
$$a_{n+1}+\b_{n+1}=r_{n+1}g_n (a+b) $$ 
and passing to the limit, we get that~: 
$$r(a)=g (a+b)\ (\mod F_\infty).$$ 
This proves assertion.

\subsection*{Choice of a subdirected system}
Let $E_\bullet$ be an Arnold space. Any decreasing real positive sequence $(s_n)$ defines a doubly directed system of Banach spaces
$$ \xymatrix{(E_0)_{s_0} \ar[r] \ar[d]  & (E_0)_{s_1} \ar[r] \ar[d] &  (E_0)_{s_2} \ar[r] \ar[d]& \cdots \\
 (E_1)_{s_0} \ar[r] \ar[d]& (E_1)_{s_1} \ar[r] \ar[d]&  (E_1)_{s_2} \ar[r] \ar[d]& \cdots \\
 (E_2)_{s_0} \ar[r] \ar[d]& (E_2)_{s_1} \ar[r] \ar[d]&  (E_2)_{s_2} \ar[r] \ar[d] & \cdots \\
 \cdots & \cdots & \cdots & \cdots
}$$
We shall now construct a sequence $(s_n)$ having a non zero limit $s_\infty$. Then, we will  interpret the sequence $ (\b_n),\ n \in \overline{\NM}$, with $\b_\infty=0$, as an element of the Arnold space which belongs to the directed system $(E_n)_{s_n},\ n \in \overline{\NM}$.

Let $(p_n)$ be a sequence for which the map $j$ is tamed bounded by $C>1$. According to
Proposition~\ref{P::tamed}, without loosing any generality, we may assume that $(p_n)$ 
$$p_n \geq e^{A^n},\ p_n^2 \geq C^2p_{n+1}2^{4k+2l+2m+5} $$
 for any $n \in \NM $.

Next, we consider the sequence $(\rho_n)$ defined by~:
$$\rho_n:= p_{n+1}^{-1/2^n}. $$
As $(p_n)$ satisfies Bruno's condition, the infinite product 
$\prod_{n \geq 0} \rho_n   $  is strictly positive.

\begin{lemma} There exists a constant $C' \geq 1$ such that
$$1-\rho_n  \geq \frac{1}{C'e^22^n} $$
\end{lemma}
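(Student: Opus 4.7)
The plan is to extract the inequality by passing to logarithms and then translating back using an elementary estimate on $1-e^{-x}$.

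First I would take the logarithm of $\rho_n = p_{n+1}^{-1/2^n}$ and feed in the first property of the sequence $(p_n)$ from Proposition~\ref{P::tamed}, namely $p_{n+1} \geq e^{A^{n+1}}$. This gives
$$-\log \rho_n \;=\; \frac{\log p_{n+1}}{2^n} \;\geq\; \frac{A^{n+1}}{2^n} \;=\; A\left(\frac{A}{2}\right)^n,$$
so $\rho_n \leq \exp\!\bigl(-A(A/2)^n\bigr)$. Since $A/2 < 1$, this quantity tends to $1$, which is the right qualitative behaviour.

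Next, I would convert this into a lower bound on $1-\rho_n$ by applying the elementary inequality $1-e^{-x} \geq x e^{-x}$, valid for all $x \geq 0$ (an immediate consequence of $e^x \geq 1+x$). With $x = A^{n+1}/2^n$ this yields
$$1-\rho_n \;\geq\; \frac{A^{n+1}}{2^n}\, e^{-A^{n+1}/2^n}.$$
Because $A < 2$, the exponent satisfies $A^{n+1}/2^n = A(A/2)^n \leq A < 2$ uniformly in $n$, so $e^{-A^{n+1}/2^n} \geq e^{-2}$; and because $A > 1$, we have $A^{n+1} \geq 1$. Combining these gives
$$1-\rho_n \;\geq\; \frac{1}{e^2 \, 2^n},$$
so the lemma holds with $C' = 1$.

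There is no real obstacle here: the only non-tautological input is the inequality $1-e^{-x}\geq xe^{-x}$. The main conceptual point is that the doubly exponential growth of $(p_n)$ (through $p_n \geq e^{A^n}$ with $A<2$) is exactly what is needed to beat the $2^{-n}$ decay: the factor $(A/2)^n$ arising from $A^{n+1}/2^n$ is worse than $2^{-n}$ only by the harmless multiplicative constant $A^{n+1} \geq 1$, which is why the exponent $A \in \,]1,2[$ appears in the hypothesis of Proposition~\ref{P::tamed}.
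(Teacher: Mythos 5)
Your proof is correct, but it takes a slightly different route from the paper, and the difference is worth recording. The paper also writes $1-\rho_n=1-e^{-\log p_{n+1}/2^n}$, but it controls this quantity with the chord-type inequality $1-e^{-X}\geq X/a$ valid for $X\in[0,\log a]$; to apply it, it must first confine $X=\log p_{n+1}/2^n$ to a bounded interval, and for that it invokes tamedness of $(p_n)$ in the form of an upper bound $p_n\leq C'e^{2^n}$, which is where the unspecified constant $C'$ comes from. It then finishes with the same lower bound $\log p_{n+1}\geq A^{n+1}>1$ that you use. You instead first replace $\log p_{n+1}/2^n$ by its lower bound $A^{n+1}/2^n$ (legitimate, since $1-e^{-x}$ is increasing, as your intermediate statement $\rho_n\leq e^{-A(A/2)^n}$ makes clear) and then apply the global inequality $1-e^{-x}\geq xe^{-x}$; because $A(A/2)^n\leq A<2$, the exponential factor is at least $e^{-2}$, and $A^{n+1}\geq 1$ gives the $2^{-n}$ decay. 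The payoff of your argument is twofold: you never use the tamedness upper bound on $(p_n)$, only property i) of Proposition~\ref{P::tamed} ($p_n\geq e^{A^n}$ with $A\in\,]1,2[$), and you obtain the explicit constant $C'=1$. What you lose relative to the paper is only the slightly stronger intermediate estimate $1-\rho_n\geq \log p_{n+1}/(C'e^2 2^n)$, which is not needed later: the sole downstream use of the lemma is the inequality $s-\rho_n s\geq \sigma_n$, which any admissible $C'$ furnishes.
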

\begin{proof}
As $(p_n)$ is tamed, there exists a constant $C' \geq 1$ such that
$$p_n \leq C'e^{2^n}. $$
We have
$$1-\rho_n    = 1-p_{n+1}^{-1/2^n}=1-e^{ -\frac{\log p_{n+1}}{2^n}}.  $$
Now
$$1-e^{-X} \geq \frac{X}{a}\ {\rm for\ }X \in [0,\log a], $$
for $a>1$. Moreover
$$ \frac{\log p_{n+1}}{2^n} \leq \frac{\log C'}{2^n}+2 \leq \log C'+2$$
thus
 $$1-(p_{n+1})^{-1/2^n}=1-e^{ -\frac{\log p_{n+1}}{2^n}} > \frac{\log p_{n+1}}{C'e^22^n}.$$
By definition of $(p_n)$, we have
 $$  \frac{\log p_{n+1}}{2^n} \geq   \frac{A^{n+1}}{2^n} > \frac{1}{2^n}  . $$
 This proves the lemma.
 \end{proof}
 
We now define the sequence $(s_n)$  by~:
$$ s_{n+1}:=\rho_n^5 s_n. $$
and put
$$\lim_{n\to +\infty}s_n=s_\infty=\left(\prod_{n \geq 0} \rho_n \right)^5 s_0>0.$$

We now define the sequence $(\s_n) $ defined by  
$$ \s_n:=\frac{s_\infty}{C'e^2 2^n}=\frac{\left(\prod_{n \geq 0} \rho_n \right)^5 s_0 }{C'e^2 2^n} ,$$
the initial term $s_0$ will be fixed below.
The previous lemma shows that~:
$$s-\rho_n s \geq \s_n $$
provided that $\rho_n s \geq s_{\infty}$.
\subsection*{Initialisation of the induction}
For $s$ sufficiently small, we denote  $| \a_n |_s $ the norm of $\a_n$ in $(E_{n})_s$ instead of $| \a_n |_{n,s} $ and similarly for the other sequences involved in the iteration.  By multiplication of all norms by a constant, we may also assume that~: 
  $$| a |_{s_0} \leq \frac{1}{4}. $$
  This will simplify our estimates when applying Proposition~\ref{P::exp}.

Define the sequence $(\e_n)$ by putting
$$\e_n=p_n^{-1} \s_n^{2k+l+m+2}. $$
 Note that the estimate 
 $$p_n^2 \geq  C^2 p_{n+1}2^{4k+2l+2m+4}  $$
 implies that
 $$C^2\e_n^2   \leq  \e_{n+1} \s_{n+1}^{2k+l+m+2}$$
The sequences $(\rho_n),\ (\s_n)$ defined above depend on the choice of $s_0$. According to condition C), we may choose $s_0<1$ small
enough such that the estimates i), ii)  below hold for $n=0 $~:
  \begin{enumerate}[{\rm i)}]
  \item $\displaystyle{| \a_n|_{\rho_n^4 s_n} \leq    \e_n }$ ;
 \item $\displaystyle{| \b_n|_{s_n} \leq    \e_n  }$ ;
  \item $\displaystyle{| \g_n |_{\rho_n^4 s_n} \leq \frac{1}{6}  \e_{n+1}\s_{n+1}^m }$.
 \end{enumerate}

We choose $N$ in the definition of $\phi$ such that
$$ p_n^{-2^{\phi(n)-n}}
 \leq  \s_{n+1}^{d+m} \e_n.$$
 for any $n \in \NM$. With this choice, we have the~:

\begin{lemma} 
\label{L::cutoff}
For  any $\g \in \Ht^{\phi(n)}_d(E_n)_{s}$, we have the estimate
 $$ | \g |_{\rho_n s} \leq   \s_{n+1}^{m} \e_n | \g |_s $$
 provided that $\rho_n s \geq s_\infty$.
\end{lemma}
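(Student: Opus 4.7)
The plan is to peel off the three ingredients that have been set up just before the lemma and assemble them in one line. First I would apply the very definition of the harmonic filtration to the element $\gamma \in \Ht^{\phi(n)}_d(E_n)_s$. Taking the comparison norm at $\rho_n s < s$ gives directly
$$|\gamma|_{\rho_n s} \;\leq\; \frac{1}{(s-\rho_n s)^d}\Bigl(\frac{\rho_n s}{s}\Bigr)^{2^{\phi(n)}}|\gamma|_s \;=\; \frac{\rho_n^{2^{\phi(n)}}}{(s(1-\rho_n))^{d}}|\gamma|_s.$$
So the task reduces to bounding the denominator from below and the numerator from above.

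For the denominator, the hypothesis $\rho_n s \geq s_\infty$ is exactly the assumption under which the estimate preceding the statement of the lemma gives $s - \rho_n s \geq \sigma_n$, so $(s(1-\rho_n))^{-d}\leq \sigma_n^{-d}$. For the numerator I would use $\rho_n = p_{n+1}^{-1/2^n}$, which turns the super-exponential factor into the purely arithmetic quantity
$$\rho_n^{2^{\phi(n)}} \;=\; p_{n+1}^{-2^{\phi(n)-n}}.$$
This is the point of the lemma: whatever one loses in the denominator is dominated by the highly negative power of $p_{n+1}$ one gains from the harmonic filtration, because $\phi(n)\geq n$.

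Now I would invoke the defining property of $N$, namely that $\phi$ was chosen precisely so that $p_{n+1}^{-2^{\phi(n)-n}}\leq \sigma_{n+1}^{d+m}\varepsilon_n$ for every $n\geq 0$ (for $n\geq N$ this is $\phi(n)=n$ and reduces to the choice of $(p_n)$; for $n<N$ it is forced by taking $N$ large, which is legitimate since only finitely many $n$ are involved). Plugging in gives
$$|\gamma|_{\rho_n s} \;\leq\; \frac{\sigma_{n+1}^{d+m}\varepsilon_n}{\sigma_n^{d}}|\gamma|_s,$$
and since the sequence $(\sigma_n)$ is decreasing, $\sigma_{n+1}^d\leq \sigma_n^d$, which collapses the fraction to $\sigma_{n+1}^m\varepsilon_n$, yielding the claimed inequality.

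I do not foresee any real obstacle: the argument is pure bookkeeping once the ingredients are placed on the table. The only point worth double-checking carefully is the interaction between the index shift in $\rho_n = p_{n+1}^{-1/2^n}$ and the inequality on $p_{\cdot}^{-2^{\phi(n)-n}}$ chosen by $N$; making sure these match is the one place a computational slip could occur, but it is settled by choosing $N$ large enough to absorb the shift for the finitely many small indices.
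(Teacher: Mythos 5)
Your proof is correct and follows essentially the same route as the paper's: the definition of the harmonic filtration, the bound $s-\rho_n s\geq\sigma_n$ (valid because $\rho_n s\geq s_\infty$), the identity $\rho_n^{2^{\phi(n)}}=p_{n+1}^{-2^{\phi(n)-n}}$, the defining inequality imposed in the choice of $N$, and the monotonicity of $(\sigma_n)$ to absorb $\sigma_{n+1}^{d}/\sigma_n^{d}$. The index shift you flag is harmless: the paper states the choice of $N$ as $p_n^{-2^{\phi(n)-n}}\leq\sigma_{n+1}^{d+m}\varepsilon_n$, which implies your version with $p_{n+1}$ since $(p_n)$ is increasing.
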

\begin{proof}
As $\g \in \Ht^{\phi(n)}_d(E_n)_{s}$, we have that
 $$ | \g |_{\rho_n s} \leq   \frac{1}{\s_n^d}\rho_n^{2^{\phi(n)}} | \g |_s.$$
The definition of $(\rho_n)$ implies that
$$ \frac{1}{\s_n^d}\rho_n^{2^{\phi(n)}}  \leq \frac{1}{\s_n^d} p_n^{-2^{\phi(n)-n}} \leq  \s_{n+1}^{m}\e_n.$$
This concludes the proof of the lemma.
\end{proof}

To conclude the proof of the theorem, we need to show the validity of the estimates i), ii) and iii). We will do this by induction.  So, we assume that i), ii) and iii) are proved up to index $n$.   
 
\subsection*{Estimate for $u_n$} 
We put $\a_{-1}=0$.  
We have
As $p_i^{-1} \leq p_{n}^{-1}$ for $i \leq n$, using i), we get that
 $$\sum_{i=-1}^{n-1}| \a_i |_{\rho_i s_i} \leq p_n^{-1} \sum_{i= 0}^{n-1}\s_i^{2k+l+m+2} \leq p_n^{-1}s_\infty^l.$$
  
As the map $j$ is $l$-tamed by the sequence $(p_n)$, we get the estimate~:
  $$N_{\rho_{n-1}^2 s_{n-1}}^k (j_n(\sum_{i=-1}^{n-1}\a_i)) \leq  C   .$$

Thus $u_n=j(\sum_{i=0}^{n-1}\a_i)( \b_n)$ satisfies the inequality
$$ N^1_{\rho_n s_n}(u_n) \leq  \frac{C}{\s_n^k} | \b_n |_{ s_n} .$$
Combined with ii), we get that~:
 $$(*)\ N^1_{\rho_n s_n}(u_n) \leq   C\e_n\s_{n}^{-k}.$$

We now write  $\b_{n+1}$ as
$$\b_{n+1}=A_n+B_n+C_n, $$
with
$$\left\{ \begin{matrix} A_n&:=&(e^{-u_n}(\Id +u_n)-\Id) a_n\ ;
\\ B_n&:= &(e^{-u_n}-\Id)\a_n\ ;\\
 C_n&:=&e^{-u_n}  \g_n .\end{matrix} \right.$$  

\subsection*{ Estimate for $A_n$} 
By Proposition~\ref{P::exp}, we get the estimate~:
$$| A_n|_{\rho_n^2 s_n} \leq  \frac{ N^1_{\rho_n s_n}(u_n)^2}{\s_n^2}$$
Using $(*)$ at index $n$,  we get that~:
$$| A_n|_{\rho_n^2 s_n} \leq   C^2\e_n^2\s_{n}^{-2k-2}$$
and
$$ C^2\e_n^2 \s_n^{-2k-2}  \leq  \frac{1}{3}  \e_{n+1}\s_{n+1}^m   .$$
 
\subsection*{Estimate for $B_n$} 
 Applying again Proposition~\ref{P::exp}, we get the estimate~:
$$| B_n|_{s_{n+1}} \leq  2\frac{ N_{\rho_n s_n}(u_n)}{\s_n} |\a_n|_{\rho_n^4 s_n}.$$
The induction hypothesis and $(*)$ at index $n$ then gives~:
$$| B_n|_{s_{n+1}} \leq  2 C \e_n^2\s_n^{-k-1}$$
and
$$ 2 C \e_n^2  \s_{n}^{-k-1} \leq \frac{1}{3}\e_{n+1} \s_{n+1}^m  . $$

\subsection*{ Estimate for $C_n$} 
 Proposition~\ref{P::exp}  gives the estimate~:
$$ |C_n|_{s_{n+1}} \leq   2 |\g_n|_{\rho_n^4 s_n}  \leq \frac{1}{3}   \e_{n+1} \s_{n+1}^m.$$
 
\subsection*{Estimate for $\b_{n+1}$} 
The three estimates for $A_n,B_n$ and $C_n$ imply that
$$| \b_{n+1}|_{s_{n+1}} \leq   \e_{n+1} \s_{n+1}^m. $$
This proves ii) at index $n+1$.

\subsection*{Estimate for $\a_{n+1}$} 
As the projections $\pi_F$ and $\pi_G$ are $m$-bounded with norm at most one,  we get using the estimates for $A_n,B_n,C_n$~:
$$ |\pi_F(\b_{n+1}) |_{\rho_{n+1} s_{n+1}} \leq  \e_{n+1}. $$
and similarly for $\pi_G$. As $\a_{n+1}=\pi_F(\b_{n+1})$ this proves i) at index $(n+1)$.
 
  \subsection*{ Estimate for $ \g_{n+1}$} 
  The map $\pi_G$ is  $m$-bounded with norm at most one and 
 $$  \g_{n+1}= \pi_G(\b_{n+1})-\pi_G(u_{n+1}(a_{n+1})).  $$
 We have
$$| \pi_G(\b_{n+1}) |_{\rho_{n+1} s_{n+1}} \leq  \e_{n+1}. $$
We now estimate the second term in the right-hand side~:
 $$|\pi_G(u_{n+1}(a_{n+1}))|_{\rho_{n+1}^3 s_{n+1}} \leq \s_{n+1}^{-m} |u_{n+1}(a_{n+1})|_{\rho_{n+1}^2 s_{n+1}} $$ 
  Using $(*)$ at index $(n+1)$, we get the estimate
 $$|u_{n+1}(a_{n+1})|_{\rho_{n+1}^2 s_{n+1}} \leq  \e_{n+1}  \s_{n+1}^{-m-k}$$
 
Combining these estimates with the ones for $\b_n$, we get that~:
 $$ |\g_{n+1} |_{\rho_{n+1}^3 s_{n+1}} \leq   (1+ \s_{n+1}^{-m-k}) \e_{n+1}.$$
 Now, we use Lemma \ref{L::cutoff} and get the inequality~:
$$ |\g_{n+1} |_{\rho_{n+1}^4 s_{n+1}} \leq \s_{n+2}^{m}\e_{n+1}   |\g_{n+1} |_{\rho_{n+1}^3 s_{n+1}}
 \leq  (1+ \s_{n+1}^{-m-k})\s_{n+2}^{m} \e_{n+1}^2 \leq \frac{1}{6} \s_{n+2}^m\e_{n+2}.$$
 This proves iii) at index $ n+1$ and concludes the proof of the theorem.
 \bibliographystyle{amsplain}
\bibliography{master}

\providecommand{\bysame}{\leavevmode\hbox to3em{\hrulefill}\thinspace}
\providecommand{\MR}{\relax\ifhmode\unskip\space\fi MR }
\providecommand{\MRhref}[2]{%
  \href{http://www.ams.org/mathscinet-getitem?mr=#1}{#2}
}
\providecommand{\href}[2]{#2}
\begin{thebibliography}{10}

\bibitem{Arnold_KAM}
V.I. Arnold, \emph{{ Proof of a theorem of A. N. Kolmogorov on the preservation
  of conditionally periodic motions under a small perturbation of the
  hamiltonian}}, Uspehi Mat. Nauk \textbf{18} (1963), no.~5, 13--40, English
  translation: Russian Math. Surveys.

\bibitem{Arnold_matrices}
\bysame, \emph{On matrices depending on parameters}, Uspehi Mat. Nauk
  \textbf{26} (1971), no.~2(158), 101--114, English Translation: Russian Math.
  Surveys, 26 (1971), 2, 29-43.

\bibitem{Baouendi}
M.~S. Baouendi and C.~Goulaouic, \emph{{Remarks on the abstract form of
  nonlinear Cauchy-Kovalevsky theorems}}, Comm. Partial Differential Equations
  \textbf{2} (1977), no.~11, 1151--1162.

\bibitem{Brjuno}
A.D. Brjuno, \emph{{Analytic form of differential equations I}}, Trans. Moscow
  Math. Soc. \textbf{25} (1971), 131--288.

\bibitem{groupes}
J.~F\'ejoz and M.D. Garay, \emph{Un th\'eor\`eme sur les actions de groupes de
  dimension infinie}, Comptes Rendus \`a l'Acad\'emie des Sciences \textbf{348}
  (2010), no.~7-8, 427--430.

\bibitem{lagrange}
M.D. Garay, \emph{{A rigidity theorem for Lagrangian deformations}}, Compositio
  Mathematica \textbf{141} (2005), no.~6, 1602--1614.

\bibitem{quantique}
\bysame, \emph{Perturbative expansions in quantum mechanics}, Annales de
  l'institut Fourier \textbf{59} (2009), no.~5, 2061--2101.

\bibitem{Oberwolfach_2012}
\bysame, \emph{{The Herman conjecture}}, Oberwolfach reports, European
  Mathematical Society, 2012.

\bibitem{Hamilton_implicit}
R.S. Hamilton, \emph{{The inverse function theorem of Nash and Moser}}, Bull.
  Amer. Math. Soc. \textbf{7} (1982), no.~1, 65--222.

\bibitem{Kolmogorov_KAM}
A.~N. Kolmogorov, \emph{On the conservation of quasi-periodic motions for a
  small perturbation of the hamiltonian function}, Dokl. Akad. Nauk SSSR
  \textbf{98} (1954), 527--530.

\bibitem{Mather_fdet}
J.~Mather, \emph{{Stability of $C^\infty$ mappings, III. Finitely determined
  map-germs}}, Publications Math\'ematiques de l'IH\'ES \textbf{35} (1968),
  127--156.

\bibitem{Moser_pde}
J.~Moser, \emph{A new technique for the construction of solutions of nonlinear
  differential equations}, Proceedings of the National Academy of Sciences of
  the U.S.A. \textbf{47} (1961), no.~11, 1824--1831.

\bibitem{Moser_KAM}
\bysame, \emph{{On the construction of almost periodic solutions for ordinary
  differential equations (Tokyo, 1969)}}, Proc. Internat. Conf. on Functional
  Analysis and Related Topics, Univ. of Tokyo Press, 1969, pp.~60--67.

\bibitem{Nagumo}
M.~Nagumo, \emph{{ \"Uber das Anfangswertproblem partieller
  Differentialgleichungen}}, Jap. J. Math. \textbf{18} (1942), 41--47.

\bibitem{Nash_imbedding}
J.~Nash, \emph{{The imbedding problem for Riemannian manifolds}}, Ann. Math.
  \textbf{63} (1965), 20--63.

\bibitem{Nirenberg}
L.~Nirenberg, \emph{{An abstract form of the nonlinear Cauchy-Kowalewski
  theorem}}, J. Differential Geometry \textbf{6} (1972), 561--576.

\bibitem{Nishida}
T.~Nishida, \emph{{A note on a theorem of Nirenberg}}, J. Differential Geom.
  \textbf{12} (1977), no.~4, 629--633.

\bibitem{Ovsyannikov}
I.V. Ovsyannikov, \emph{{A singular operator in a scale of Banach spaces}},
  Soviet Math. Dokl. \textbf{6} (1965), 1025--1028.

\bibitem{Sergeraert}
F.~Sergeraert, \emph{{Un th\'eor\`eme de fonctions implicites sur certains
  espaces de Fr\'echet et quelques applications}}, Ann. Sci. \'Ecole Norm. Sup.
  \textbf{5} (1972), no.~4, 599--660.

\bibitem{VS}
C.~Sevenheck and D.~van Straten, \emph{Deformation of singular lagrangian
  subvarieties}, {Math. Annalen} \textbf{327} (2003), no.~1, 79--102.

\bibitem{Sevryuk_reversible}
M.B. Sevryuk, \emph{{Reversible Systems}}, Lecture Notes in Math., vol. 1211,
  Springer, 1986, 319 pp.

\bibitem{Siegel_vecteurs}
C.L. Siegel, \emph{{\"Uber die Normalform analytischer Differentialgleichungen
  in der N\"ahe einer Gleichgewichtsl\"osung}}, Nach. Akad. Wiss. G\"ottingen,
  math.-phys. (1952), 21--30.

\bibitem{Stolovitch_KAM}
L.~Stolovitch, \emph{{ A KAM phenomenon for singular holomorphic vector
  fields}}, Publ. Math. Inst. Hautes \'Etudes Sci. \textbf{102} (2005),
  99--165.

\bibitem{Tyurina}
G.N. Tyurina, \emph{Locally semi-universal plane deformations of isolated
  singularities in complex space}, Math. USSR, Izv \textbf{32:3} (1968),
  967--999.

\bibitem{VanStraten_Lagrangian}
D.~van Straten, \emph{{Some problems on Lagrangian singularities}},
  Singularities and computer algebra, Kaiserslautern, October 18--20, 2004.
  (C.~Lossen and G.~Pfister, eds.), London Mathematical Society Lecture Note
  Series, vol. 324, Cambridge Univ. Press, 2006, p.~333–349.

\bibitem{Zehnder_implicit}
V.M. Zehnder, \emph{{Generalized implicit function theorems with applications
  to some small divisor problems I}}, Communications Pure Applied Mathematics
  \textbf{28} (1975), 91--140.

\end{thebibliography}
 \end{document}